\title{Rado's Graph has no Quantum Symmetry}
\author{Husam Ismaeel \thanks{ Email address: {\texttt{2826284i@student.gla.ac.uk}}}}
\date{}
\newtheorem{lemma}{Lemma}[section]
\newtheorem{theorem}{Theorem}[section]
\newtheorem{definition}{Definition}[section]
\renewenvironment{abstract}
 {\quotation\noindent\textbf{Abstract. }}
 {\endquotation}
\begin{document}

\maketitle
\begin{abstract}
    We prove that Rado's graph admits no quantum symmetries. 
\end{abstract}

\section{Introduction}
Quantum automorphisms of finite simple graphs were introduced in \cite{Bichon2003}, and explored initially by Banica and Bichon \cite{BanicaBichon2007},\cite{Banica2005}. They have several interesting connections to quantum groups and graph theory, see for example the remarkable results in \cite{Mancinska2020}.
The notion of quantum automorphism was extended to arbitrary simple graphs by Voigt \cite{Voigt2023}, where the question of whether  Rado's graph admitted quantum symmetry was posed. Interest in Rado's graph grew after Erdős and Rényi showed that there exists a graph R with the property that it is isomorphic, with probability 1, to a countable graph where each pair of vertices is connected independently with probability  $\frac{1}{2}$ \cite{Erdos1963}. Shortly after, Richard Rado explicitly constructed the graph R \cite{Rado1964}, hence the name.

This work was done as part of a summer project funded by the University of Glasgow. The author is extremely grateful to Christian Voigt for his thorough guidance, frequent discussions, and encouragement to work on this topic. 
\section{Preliminaries}
In this section, we define quantum automorphisms of simple graphs as done in \cite{Voigt2023}, and introduce Rado's graph and some of its useful properties. 
\begin{definition}
    A quantum permutation of a set X is a pair $\sigma = (H,u)$, where $H$ is a Hilbert space and $u = (u_{xy})_{x,y \in X}$ is a family of projections in $B(H)$ such that
    \[
    \sum_{y \in X} u_{xy} = \sum_{y \in X} u_{yx} = 1,
    \]
   where convergence is taken in the strong operator topology. Also, for $(x,y) \neq (a,b)$, $u_{xy}$ and $u_{ab}$ are pairwise orthogonal projections if $x = a$  or $y=b$.
\end{definition}
Note that this definition encompasses classical permutations of  sets $X$, where $u$ is the permutation matrix, and the Hilbert space $H$ is $\mathbb{C}$.
\begin{definition}
    A quantum automorphism $\sigma = (H, u)$ of a simple graph $X = (V_X,A)$, where $V_X$ is the set of vertices, and $A$ is the adjacency matrix, is a quantum permutation of $V_X$ such that 
    \begin{equation} \label{eq:autconstraints}
    u_{x_1y_1}u_{x_2y_2} = 0,
    \end{equation}
    if $A_{x_1x_2} \neq A_{y_1y_2}$.
\end{definition}
Again, this definition generalises automorphisms of simple graphs $X$, 
since if $u$ was the classical permutation matrix, the requirement above enforces $u$ to commute with the adjacency matrix $A$.  A graph $X$ is said to have \textit{quantum symmetry} if it has a quantum automorphism $(H,u)$ where not all the elements of $u$ commute.

There are several ways to describe Rado's graph $R$. One straightforward description is to have $R = (V_R, A)$  where the vertices $V_R$ are the prime numbers congruent 1 mod 4, and $A_{pq} =1$ if and only if $p$ is a quadratic residue mod $q$. It is also the unique simple countable graph such that for any disjoint finite sets $U$ and $V$ of $V_R$, there exists a vertex $i$ such that $A_{iu} = 1$ for all $u \in U$, and $A_{iv} = 0$ for $v \in V$. See \cite{Cameron1997} for a proof.

\section{Main Result}
Throughout we let $ R = (V_R, A) $ be the Rado graph, with $V_R$ being the set of vertices and $A$ its adjacency matrix. 
\begin{lemma} \label{lem:finitesum}
Let $ (H,u) $ be a quantum automorphism of $ R $, let $ P_0, P_1, Q_0, Q_1 $ be finite subsets of $V_R$ such that $ (P_0 \cup Q_0) \cap(P_1 \cup Q_1)  =  \varnothing $, and let $ y, t \in V_R $ be two distinct vertices. Consider the projections
\begin{align*}
p_0 &= \sum_{x \in P_0} u_{xy}, \quad  p_1 = \sum_{x \in P_1} u_{xy}, \\
q_0 &= \sum_{x \in Q_0} u_{xt}, \quad q_1 = \sum_{x \in Q_1} u_{xt}.
\end{align*}
Then for any $ v \in p_1(H) $ there exist vectors $ v_0 \in p_0^\perp(H) \cap q_0^\perp(H), v_1 \in p_0^\perp(H) \cap q_1^\perp(H) $ such that $ v = v_0 + v_1 $ and $ \langle v_0, v_1 \rangle = 0 $. 
\end{lemma}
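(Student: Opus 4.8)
The plan is to record the orthogonality relations forced by the hypotheses, split $v$ by the obvious projection coming from $q_0$, and then absorb the resulting defect using the leftover column-$t$ projection, with solvability of the correction guaranteed by the extension property of $R$.

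First I would note that $(P_0\cup Q_0)\cap(P_1\cup Q_1)=\varnothing$ gives in particular $P_0\cap P_1=\varnothing$ and $Q_0\cap Q_1=\varnothing$. Since $p_0,p_1$ are finite sums of \emph{distinct} projections taken from the single column $y$, and $q_0,q_1$ from the single column $t$, column orthogonality of the quantum permutation yields $u_{xy}u_{x'y}=0$ for $x\neq x'$, hence $p_0p_1=0$, and likewise $q_0q_1=0$; in particular each of $p_0,p_1,q_0,q_1$ is a projection. Consequently, for $v\in p_1(H)$ we have $p_0v=p_0p_1v=0$, so $v\in p_0^\perp(H)$ automatically, and the entire problem is to split the given $v$ \emph{inside} $p_0^\perp(H)$ as an orthogonal sum of a vector killed by $q_0$ and a vector killed by $q_1$.

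The obvious candidate is $v_1=q_0v$ and $v_0=(1-q_0)v$. This is an orthogonal decomposition summing to $v$, and since $q_0q_1=0$ one checks immediately that $q_1v_1=0$ and $q_0v_0=0$; thus the two $q$-conditions and $\langle v_0,v_1\rangle=0$ all hold for free. The only failure is that $v_0,v_1$ need not lie in $p_0^\perp(H)$, the obstruction being the single vector $p_0q_0v=p_0q_0p_1v$. To repair this I would introduce the leftover projection $r=1-q_0-q_1=\sum_{z\notin Q_0\cup Q_1}u_{zt}$, whose range $r(H)=\ker q_0\cap\ker q_1$ can be shifted freely between $v_0$ and $v_1$ without spoiling the $q$-conditions. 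Replacing the candidate by $v_1=q_0v+\beta$ and $v_0=q_1v+(rv-\beta)$ with $\beta\in r(H)$, the identity $p_0(q_0+q_1+r)v=p_0v=0$ collapses both membership requirements $v_0,v_1\in p_0^\perp(H)$ into the single linear equation $p_0\beta=-p_0q_0v$ in the unknown $\beta\in r(H)$, while orthogonality reduces to the scalar normalisation $\langle rv-\beta,\beta\rangle=0$.

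The crux, and the step I expect to be the main obstacle, is the solvability of $p_0\beta=-p_0q_0v$ with $\beta\in r(H)$ subject to that normalisation. This cannot come from the magic-unitary and automorphism relations in isolation: although $\sum_{z\in V_R}p_0u_{zt}p_1=p_0p_1=0$, the operator $p_0q_0p_1=\sum_{b\in Q_0}p_0u_{bt}p_1$ is only a partial column sum and is generically nonzero, so the defect $p_0q_0v$ is genuinely present, and abstractly the required splitting can fail. What saves the situation is that $V_R$ is infinite, so $r$ is a large projection and $r(H)$ is a rich reservoir; I would use the extension property of $R$ to produce, outside $Q_0\cup Q_1$, enough column-$t$ vertices with prescribed adjacencies to $P_0\cup P_1\cup\{y\}$ to guarantee that $-p_0q_0v$ lies in $p_0\bigl(r(H)\bigr)$, making the linear equation solvable, and that its (large) affine solution set meets the single quadratic normalisation. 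The delicate point to be handled carefully is upgrading the extension property from the \emph{density} it naturally provides to the \emph{exact} range membership the equation demands, and checking its compatibility with the normalisation.
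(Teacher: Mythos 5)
Your reduction is algebraically sound as far as it goes: with $r=1-q_0-q_1$ and the ansatz $v_1=q_0v+\beta$, $v_0=q_1v+(rv-\beta)$ for $\beta\in r(H)$, the sum, the two $q$-conditions, and (using $p_0v=p_0p_1v=0$) the collapse of both $p_0$-memberships to the single equation $p_0\beta=-p_0q_0v$ are all correct, as is the reduction of orthogonality to $\langle rv-\beta,\beta\rangle=0$. But the crux you flag yourself is a genuine gap, and your proposed mechanism for closing it cannot work. The only relations the quantum automorphism structure provides are of \emph{annihilation} type: for a vertex $z\notin Q_0\cup Q_1$ chosen via the extension property with prescribed adjacencies to $P_0$, the constraint $u_{xy}u_{zt}=0$ when $A_{xz}\neq A_{yt}$ either gives $p_0u_{zt}=0$ (if you choose $A_{xz}\neq A_{yt}$ for all $x\in P_0$), in which case $u_{zt}(H)\subseteq\ker p_0$ and such $\beta$ contribute nothing to solving $p_0\beta=-p_0q_0v$, or it gives no relation at all, in which case you have no control over $p_0u_{zt}(H)$. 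No choice of adjacencies can certify that a \emph{specific} vector lies in the image $p_0(r(H))$; the relations can only ever place vectors in kernels. Moreover, in infinite dimensions $p_0(r(H))$ need not be closed, so even if the extension property yielded density (which it does not obviously do here), that would produce only approximate solutions $\beta_n$, whereas the lemma demands exact membership $v_1\in p_0^\perp(H)$; and the quadratic normalisation $\langle rv-\beta,\beta\rangle=0$ is an additional untreated constraint on top of the linear equation.

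The missing idea is to decompose $v$ not by the column-$t$ projections $q_0,q_1,r$ but by an entire \emph{row} of $u$, which is where the paper goes. Use the extension property to choose a single vertex $i$ with $A_{xi}=0$ for all $x\in P_0\cup Q_0$ and $A_{xi}=1$ for all $x\in P_1\cup Q_1$, partition the column set as $V_{\alpha\beta}=\{j : A_{yj}\neq\alpha,\ A_{tj}\neq\beta\}$, and set $v_\beta=\sum_{j\in V_{0\beta}}u_{ij}v$. The automorphism relations then give $p_\alpha u_{ij}=0=u_{ij}p_\alpha$ and $q_\beta u_{ij}=0=u_{ij}q_\beta$ for $j\in V_{\alpha\beta}$, so each $u_{ij}(H)$ lands in $p_\alpha^\perp(H)\cap q_\beta^\perp(H)$ outright; the terms with $j\in V_{1\beta}$ kill $v\in p_1(H)$, so the strong row-sum identity $\sum_j u_{ij}=1$ yields $v=v_0+v_1$, and $\langle v_0,v_1\rangle=0$ is automatic because projections in one row are mutually orthogonal. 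This route succeeds precisely because every required membership is obtained from annihilation relations --- the only kind available --- with nothing left to solve, whereas your ansatz manufactures an exact solvability problem that the structure cannot answer.
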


\begin{proof}
For $\alpha, \beta \in \{0,1\}$ we define
\[
V_{\alpha \beta} = \{j \in V_R \mid A_{yj} \neq \alpha \text{ and } A_{tj} \neq \beta \}.
\]
Then the sets $ V_{\alpha \beta} $ are mutually disjoint, and their union is $ V_R $. 

Since $ P_0 \cup Q_0 $ and $ P_1 \cup Q_1 $ are disjoint, it follows from the properties of Rado's graph that there exists $ i \in V_R $ such that $ A_{xi} = 0 $ for  all $ x \in P_0 \cup Q_0 $ and $ A_{xi} = 1 $ for all $ x \in P_1 \cup Q_1 $. Hence, for all $ \alpha, \beta \in \{0,1\} $, we have 
\begin{alignat*}{2}
u_{xy}u_{ij} &= 0 = u_{ij}u_{xy} \qquad &&\text{ if } x \in P_\alpha \text{ and } j \in V_{\alpha \beta}, \\ 
u_{xt} u_{ij} &= 0 = u_{ij}u_{xt} &&\text{ if } x \in Q_\beta \text{ and } j \in V_{\alpha \beta},
\end{alignat*}
which implies 
\begin{alignat*}{2}
p_\alpha u_{ij} &= 0 = u_{ij} p_\alpha \qquad &&\text{ if }  j \in V_{\alpha \beta}, \\ 
q_\beta u_{ij} &= 0 = u_{ij} q_\beta &&\text{ if } j \in V_{\alpha \beta},
\end{alignat*}
by the definition of the projections $ p_\alpha, q_\beta $. This shows 
\begin{equation*} \label{eq:imagecontainment}
u_{ij}(H) \subseteq p^\perp_{\alpha}(H) \cap q^\perp_\beta(H).
\end{equation*}
for all $ j \in V_{\alpha \beta} $. 

Now let $ v = p_1 v \in p_1(H) $ and define $ v_0 = \sum_{j \in V_{00}} u_{ij} v $ 
and $ v_1 = \sum_{j \in V_{01}} u_{ij}v $. For $ j \in V_{1\beta} $ with $ \beta \in \{0,1\} $ 
we have $ u_{ij} v = u_{ij} p_1 v = 0 $ by our above considerations, and thus
\[
v = \sum_{j \in V_R} u_{ij} v = \sum_{\alpha, \beta \in \{0,1\}} \sum_{j \in V_{\alpha \beta}} u_{ij} v = \sum_{j \in V_{00}} u_{ij} v + \sum_{j \in V_{01}} u_{ij} v = v_0 + v_1.
\]
Since $ u_{ij} v \in p_{0}^\perp (H) \cap q_\beta^\perp(H) $ for $ j \in V_{0\beta} $ we also obtain $ v_\beta \in p_{0}^\perp (H) \cap q_\beta^\perp(H)$ for $ \beta \in \{0,1\} $, and finally, 
\(
 \langle v_0, v_1 \rangle = 0, 
\)
since projections in the same row of $ u $ are orthogonal.  
\end{proof}
\begin{theorem}
Let $ (H,u) $ be a quantum automorphism of $ R $ and let $ x, y, s, t \in V_R$ be arbitrary. Then $u_{xy}$ and $u_{st}$ commute. 
\end{theorem}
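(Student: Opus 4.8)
The plan is to fix $x,y,s,t$ and show that the projections $p:=u_{xy}$ and $q:=u_{st}$ commute. First I would dispose of the degenerate cases: if $x=s$ or $y=t$ the two projections lie in a common row or column of $u$, hence are orthogonal (or equal) and so commute. I may therefore assume $x\neq s$ and $y\neq t$. I would then reduce commutation to a geometric statement via the standard criterion that two projections commute if and only if
\[
p(H)=\bigl(p(H)\cap q(H)\bigr)\oplus\bigl(p(H)\cap q^{\perp}(H)\bigr),
\]
the sum being automatically orthogonal; this is elementary and I would include a one-line proof. Thus it suffices to show that every $v\in p(H)$ can be written $v=a_{0}+a_{1}$ with $a_{0}\in p(H)\cap q(H)$ and $a_{1}\in p(H)\cap q^{\perp}(H)$.

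To build this splitting I would apply Lemma~\ref{lem:finitesum} with $P_{1}=\{x\}$ and $Q_{1}=\{s\}$, so that $p_{1}=p$ and $q_{1}=q$, while letting $P_{0}$ and $Q_{0}$ run through an increasing exhaustion of $V_{R}\setminus\{x,s\}$ (the disjointness hypothesis only bars $x,s$ from $P_{0}\cup Q_{0}$, so this is legitimate). For each choice the lemma produces $v=v_{0}+v_{1}$ with $v_{0}\in p_{0}^{\perp}(H)\cap q_{0}^{\perp}(H)$ and $v_{1}\in p_{0}^{\perp}(H)\cap q^{\perp}(H)$, and with $\|v_{0}\|^{2}+\|v_{1}\|^{2}=\|v\|^{2}$. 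Since the entries of a fixed column are mutually orthogonal projections summing strongly to $1$, one has $p_{0}\to 1-u_{xy}-u_{sy}$ and $q_{0}\to 1-u_{st}-u_{xt}$ strongly along the exhaustion. Passing to a weakly convergent subnet $v_{0}\rightharpoonup w_{0}$, $v_{1}\rightharpoonup w_{1}$ and combining weak convergence of the $v_{i}$ with the norm convergence of $p_{0}z$ and $q_{0}z$ on test vectors $z$, I would obtain $v=w_{0}+w_{1}$ with $w_{0},w_{1}\in(u_{xy}+u_{sy})(H)$ and $w_{0}\in(u_{st}+u_{xt})(H)$; and since $q^{\perp}(H)$ is weakly closed, $w_{1}\in q^{\perp}(H)$.

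The main obstacle is precisely the two surplus projections $u_{sy}$ and $u_{xt}$ appearing in the limit: because the finite sets are forced to omit $x$ and $s$, the limiting vectors land in the ranges of $u_{xy}+u_{sy}$ and $u_{st}+u_{xt}$ rather than of $u_{xy}$ and $u_{st}$ themselves. I expect to eliminate them purely through the orthogonality relations of $u$. The column relation $u_{xy}u_{sy}=0$ gives $u_{sy}v=0$, so the $u_{sy}$-parts of $w_{0}$ and $w_{1}$ cancel and, setting $a_{i}:=u_{xy}w_{i}$, one gets $v=a_{0}+a_{1}$ with $a_{0},a_{1}\in p(H)$. Writing $w_{1}=a_{1}+u_{sy}w_{1}$ and applying $q$, the row relation $u_{st}u_{sy}=0$ forces $qa_{1}=0$, so $a_{1}\in p(H)\cap q^{\perp}(H)$. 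For $a_{0}$ I would use $w_{0}=u_{st}w_{0}+u_{xt}w_{0}$ together with the row relation $u_{xy}u_{xt}=0$ to deduce $a_{0}=u_{xy}u_{st}a_{0}$; a short norm computation, $\|a_{0}\|^{2}=\langle u_{st}a_{0},a_{0}\rangle=\|u_{st}a_{0}\|^{2}$, then gives $a_{0}\in q(H)$. This furnishes the required orthogonal decomposition of $v$ and hence the commutation of $u_{xy}$ and $u_{st}$. The genuinely delicate points are the strong/weak limit passage and the careful bookkeeping of the boundary projections; once the limit has isolated exactly $u_{sy}$ and $u_{xt}$, the row and column relations of the quantum permutation finish the argument.
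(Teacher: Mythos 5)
Your proposal is correct, but it takes a genuinely different route from the paper. The paper dissolves your ``surplus projection'' problem before any limit is taken: it applies Lemma~\ref{lem:finitesum} with $Q_1=\{x,s\}$ rather than $\{s\}$, so that $q_1=u_{xt}+u_{st}$ satisfies $p_1q_1=u_{xy}u_{st}$ and $q_1p_1=u_{st}u_{xy}$, and commutation reduces to self-adjointness of $q_1p_1$, i.e.\ to $p_1^\perp q_1p_1=0$. This is proved one vertex at a time: taking $P_0=\{z\}$ a singleton, the lemma's decomposition plus Cauchy--Schwarz yields the quantitative estimate $\|p_0q_1v\|^2\le\|q^\perp p_0q_1v\|$ with $q=q_0+q_1$, and precisely because $x,s\in Q_1$ the projection $q$ tends to $1$ strongly as $Q_0$ grows, with no leftover column entries; so $p_0q_1p_1=0$, and summing over $z$ together with the direct computation $u_{sy}q_1p_1=0$ gives $p_1^\perp q_1p_1=0$. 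Thus the paper needs no weak compactness at all --- every limit is a strong limit applied to a fixed vector --- whereas you keep the natural choice $q_1=u_{st}$, accept the boundary terms $u_{sy},u_{xt}$ in a weak subnet limit, and kill them algebraically via the row/column relations, feeding into the lattice-theoretic criterion $p(H)=(p(H)\cap q(H))\oplus(p(H)\cap q^\perp(H))$. Your algebra does close up, with one micro-step to make explicit: the relations you cite ($w_0=u_{st}w_0+u_{xt}w_0$ and $u_{xy}u_{xt}=0$) give only $a_0=u_{xy}u_{st}w_0$; to reach $a_0=u_{xy}u_{st}a_0$ you also need $u_{st}w_0=u_{st}a_0$, which follows from $w_0=a_0+u_{sy}w_0$ and $u_{st}u_{sy}=0$ --- the same relation you already use for $a_1$. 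Note also that orthogonality of $v_0,v_1$ is lost under weak limits, but you never need $\langle w_0,w_1\rangle=0$, since the criterion's orthogonality is automatic from $q(H)\perp q^\perp(H)$. In short: the paper's argument is more elementary and estimate-driven, its one decisive trick being the enlargement $Q_1=\{x,s\}$; yours is more geometric and shows the surplus terms can be handled \emph{after} the limit by pure orthogonality bookkeeping, at the cost of subnet and weak-topology care (bounded nets, weakly closed ranges, strong convergence surviving passage to subnets).
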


\begin{proof}
If $ x = s $ or $ y = t $ then $ u_{xy} $ and $ u_{st} $ clearly commute because $ u $ is a magic unitary. 
We shall therefore assume that $ x \neq s $ and $ y \neq t $ in the sequel. 

Let $ P_1 = \{x\}, Q_1 = \{x,s\} $, and $ P_0 = \{z\} $ for $ z \in V_R \setminus Q_1$, and let $ Q_0 $ be any finite subset of $ V_R \setminus Q_1$. Then $ (P_0 \cup Q_0) \cap (P_1 \cup Q_1) = \varnothing $, and we define
\begin{alignat*}{2}
p_0 &= \sum_{r \in P_0} u_{ry} = u_{zy}, & p_1 &= \sum_{r \in P_1} u_{ry} = u_{xy}, \\
q_0 &= \sum_{r \in Q_0} u_{rt}, \qquad \qquad & q_1 &= \sum_{r \in Q_1} u_{rt} =  u_{xt} + u_{st}. 
\end{alignat*}
Note that $ p_1 q_1 = u_{xy} u_{st} $ and $ q_1 p_1 = u_{st} u_{xy} $, so that it is enough to show that $ q_1 p_1 $ is self-adjoint. For this, in turn, it suffices to check $ p_1^\perp q_1 p_1 = 0 $ since this implies $ q_1 p_1 = (p_1 + p_1^\perp) q_1 p_1 = p_1 q_1 p_1 $. 

As a preliminary step, we show $ p_0 q_1 p_1 = 0 $. 
Applying Lemma \ref{lem:finitesum} to the sets $ P_0, Q_0, P_1, Q_1 $, it follows that for a unit vector $v \in p_1(H) $ there exists orthogonal vectors $ v_0 \in p_{0}^\perp(H) \cap q_0^\perp(H), v_1 \in p_{0}^\perp(H) \cap q_1^\perp(H) $ such that $ v = v_0 + v_1 $. Since $ v_1 \in q_1^\perp(H) $ we get 
$ q_1 v = q_1 v_{0}  $. If we set $ w = v_0 - q_1v $, then this implies $ q_1 w = q_1 v_0 - q_1 v = 0 $, so that $ w \in q_1^\perp(H) $. 

Next note that $ q_1(H) $ is contained in $ q_0^\perp(H) $ because $ Q_0 $ and $ Q_1 $ are disjoint. Since $ v_{0} $ is contained in $ q_0^\perp(H) $ as well we conclude that $ w= v_0 - q_1v \in q_0^\perp(H) $. Thus, if we let 
\begin{equation*}
q = \sum_{r \in Q_0 \cup Q_1} u_{rt} = q_0 + q_1,
\end{equation*}
then we get $ w \in q_0^\perp(H) \cap q_1^\perp(H) = q^\perp(H) $. 

In addition, recalling that $ w = v_0 - q_1 v = v_0 - q_1 v_0 = q_1^\perp v_0 $, and that $ v_0, v_1 $ are orthogonal, we have 
\[
\| w \| \leq \|v_{0} \| \leq \|v_0 + v_1 \| = \| v \| = 1.
\]
Using the Cauchy-Schwarz inequality we therefore get 
\[
 |\langle w, p_0 q_1v \rangle | = |\langle q^\perp w, p_0 q_1 v \rangle | = |\langle w, q^\perp p_0 q_1v \rangle | \leq \| w \| \| q^\perp p_0 q_1 v \| \leq \| q^\perp p_0q_1 v \|.
\]
Since $v_0 \in p_0^\perp(H)$, we also have
\[
| \langle w, p_0q_1v \rangle | = | \langle v_0  - q_1v, p_0q_1v \rangle | = | \langle -p_0 q_1v, p_0q_1v \rangle | = | \langle p_0 q_1v, p_0q_1v \rangle | = \| p_0q_1v \|^2.
\]
Combining these formulas gives
\begin{equation*} 
\| p_0q_1v \|^2 \leq \| q^\perp p_0q_1v \|.
\end{equation*}
Now, by choosing the finite set $ Q_0 $ defining $ q_0 $ sufficiently large, the term $ \|q^\perp p_0q_1v \| $ can be made arbitrarily small. We therefore conclude $ p_0 q_1v = 0 $, and since $v \in p_1(H)$ was arbitrary this shows $p_0q_1p_1 = 0$. 

Thus, for each $z \in V_{R} \setminus Q_1$ we have $u_{zy} q_1p_1 = 0 $, which implies 
\[
\bigg(\sum_{r \in V_R \setminus \{ x,s\}} u_{ry} \bigg)q_1p_1 = 0.
\]
Finally, we note that
\(
u_{sy} q_1p_1 = u_{sy}u_{xt}u_{xy}  + u_{sy}u_{st}u_{xy}  = 0
\)
since $ s \neq x $ and $ y \neq t $, so that 
\[
\bigg(\sum_{r \in V_R \setminus\{x\}} u_{ry} \bigg) q_1p_1 = p_1^\perp q_1p_1 = 0
\]
as required. This finishes the proof.
\end{proof}
\printbibliography
\end{document}